\documentclass{article}

\usepackage{amsmath,amssymb,amsthm}
\usepackage[utf8x]{inputenc}
\usepackage[T1]{fontenc}
\usepackage{lmodern}
\usepackage{microtype}
\usepackage{bbding}
\usepackage{color}
\usepackage{subcaption}
\usepackage{float}
\usepackage{authblk}

\usepackage{tikz}
\usetikzlibrary{arrows}
\usetikzlibrary{arrows}
\usetikzlibrary{bending}
\usetikzlibrary{arrows}
\usetikzlibrary{positioning}

\usepackage{mathmag}
\usepackage{qedequation}

\newuntheorem{thm}{Theorem}
\newuntheorem{exer}{Exercise}
\newuntheorem{problem}{Problem}
\newuntheorem{conjecture}{Conjecture}

%%%%%%%%%%%%%%%%%%%%%%%%%%%%%%%%%%%%%%%
%% some standard shorthand notations %%
%%   copy and paste those you need   %%
%%%%%%%%%%%%%%%%%%%%%%%%%%%%%%%%%%%%%%%

%% finite field constructions

\newcommand{\ff}[1]{{\mathbb F}_{#1}}

%% trace and norm functions

%% integers, rationals, complex, reals, naturals

\newcommand{\Z}{\mathbb Z}
\newcommand{\F}{\mathbb F}

\renewcommand{\and}{\\}

\begin{document}
\title{A result on polynomials derived via graph theory}

\author{
        Robert S. Coulter\\
            \scriptsize University of Delaware\\
            \scriptsize Newark, Delaware 19716\\
            \small\tt coulter@udel.edu
        \and 
        \vspace{.5cm}
        Stefaan De Winter\\
            \scriptsize Michigan Technological University\\
            \scriptsize Houghton, Michigan 49931\\
            \small\tt sgdewint@mtu.edu
        \and
        \vspace{.5cm}
        Alex Kodess\\
            \scriptsize Farmingdale State College\\
            \scriptsize Farmingdale, New York 11735\\
            \small\tt alex.kodess@farmingdale.edu
        \and 
        \vspace{.5cm}
        Felix Lazebnik\\
            \scriptsize University of Delaware\\
            \scriptsize Newark, Delaware 19716\\
            \small\tt fellaz@udel.edu
        }

\maketitle

\noindent
{\bf Math Subject Classification: 
    05C20, 
    11T06, 
    05C60.
}

\bigskip

Graph theory is a comparatively young mathematical discipline.  It is often hard to construct graphs that satisfy certain  properties purely combinatorially,  i.e., by taking a set of vertices and saying which vertex is connected to which.     Often such areas of classical mathematics as number theory,  geometry, or algebra  are used for this,  and the methods from the related areas are used to prove the properties of the obtained graphs.  The examples are numerous, and   many 
%a few 
of them can be found in books and comprehensive survey articles.  See, for example,
 Alon \cite{alon_icm}; 
Babai and Frankl \cite{babai_frankl}; 
Biggs \cite{Biggs94}; 
F\" uredi and Simonovits \cite{FurSim13}; Brouwer and Haemers \cite{br_haem}, and 
Alon and Spencer \cite{alon_spencer}. Here we wish to mention just a few such applications. 
The probabilistic method was used to prove the 
existence of certain graphs in Ramsey theory, and explicit constructions for these graphs are still unknown (see \cite{alon_spencer}). 
Constructions and analysis of Ramanujan graphs are often based on algebra and number theory. 
Methods of linear algebra are fundamental for studies of expanders and graphs with high degree of symmetry (see \cite{babai_frankl} and \cite{br_haem}). 
Lov\'asz's proof \cite{lovasz_kneiser} of a conjecture on the chromatic number of Kneser graphs made use of algebraic topology.

 Can the direction  be reversed, i.e., can  graph theory be used to obtain results in some classical areas of mathematics?     Sometimes it can,  but the number of such instances is much smaller. See, for example,  Swan's proof of the Amitsur-Levitzki theorem \cite{Swan63}, or  
 a counterexample to Borsuk’s conjecture
 by Kahn and Kalai \cite{KahnKal93} and related work by  Bondarenko \cite{bondar}. 
 Extremal graph theory was used in probability by Katona \cite{Kat69}, and in geometry and potential theory by Tur\'an  \cite{Tur70},  and  Erd\H{o}s, Meir,  Sos, and Tur\'an \cite{ErdMeiSosTur72}.  
 For some applications of graph theory to linear algebra, see Doob \cite{Doob84}. A number of applications of graph theory to pure mathematics are mentioned in 
 Lov\'asz, Pyber, Welsh and Ziegler \cite{LovPybWelZie95}. 
The story we wish to share is about one such example. It  was discovered  entirely not by design.

In order to describe our problem,  we need a few preliminaries.   Let $p$ be a prime number,  and $\Z_p= \{0,1,2, \ldots, p-1\}$  be the set of residue classes of integers modulo $p$,  where each class is represented by the unique integer $i$,
$0\le i\le p-1$
%from $[0,p-1]$
 belonging to that class.
%the classes are represented by nonnegative integers smaller than $p$ to which they belong.
It is known (see for example, Ireland and Rosen \cite{ireland}) that with respect to modular arithmetic, $\Z_p$ is a field.     For instance, in $\Z_{7}$, $1 + 6 = 0$, $3\cdot 4 = 5$, and
$3^{-1} = 5$ since $3\cdot 5 = 1$.
 One can consider polynomials with coefficients from $\Z_p$;  let $\Z_p[X]$ denote the set of all of them.  Every polynomial  $f\in \Z_p[X]$ defines a function on $\Z_p$ when it is evaluated at elements of $\Z_p$. For example,   for  $f= X^3 -4X+6 = X^3 +3X+6 \in \Z_7[X]$,
$f(0)=6$, $f(1)=1^3+3\cdot 1+6=10=3$, and $f(2)=20= 6$.   Also,   $f(3) = 42=0$,  and we say that $3$ is a root of $f$.  Counting or estimating the number of roots of polynomials from $\Z_p[X]$  in $\Z_p$ is a fundamental problem in the area of mathematics called  algebraic geometry.

All results in this article hold over any finite field of odd characteristic, but for ease of presentation we will simply use the field $\mathbb{Z}_p$, $p > 2$.

%The reader familiar with finite fields different from $\Z_p$ can replace $\Z_p$  everywhere in this paper with any finite field.    It is known that a finite field of $q$ elements exists if and only if $q$ is a prime power,  and that all finite fields of $q$  elements are isomorphic (with respect  to field isomorphisms).  See \cite{ireland} for details. Thus we can speak of {\it the} finite field of $q$ elements,  and denote it by $\F_q$. Our original problem, that we are ready to present, used fields $\F_q$.  Clearly,  for $q=p$,  $\F_q=\F_p$.

Here is the problem. Recently we were surprised to learn that for any prime  $p\ge 3$ and any natural numbers $m$ and $n$ satisfying
$mn\equiv 1\bmod (p-1)$,  the trinomials
$X^{m+1}-2X+1$ and $X^{n+1}-2X+1$   have the same number of distinct roots in  $\ff{p}$.  Of course,  the coefficients 1 and $-2$ are elements of $\F_p$ and  $ -2= p-2$. For example, it is easy to check that 
for $p = 11$, $m = 3$, $n = 7$, 
the trinomial $X^{m+1} - 2X + 1$ has roots $1$, $5$, and $8$ (with root $8$ of multiplicity 2), and the trinomial $X^{n+1} - 2X + 1$ has roots 
$1$, $2$, and $3$.

 How did we arrive at this strange fact?   We will explain it a bit later,   after we discuss a special  class of digraphs.

A {\it directed graph}, or just {\it digraph},
$D = (V,A)$ is a pair of two sets $V$ and $A \subseteq V\times V$; $V$ is called
the set of {\it vertices} of $D$, and $A$ is called the set of {\it arcs} of $D$. 
All undefined terms related to digraphs can be found in Bang-Jensen and Gutin \cite{bbangjensen09}.

The digraph $D$ of Figure \ref{digraph}(a) has vertex set $V = \{a,b,c,d\}$ and
arc set $A = \{(a,b),(a,c),(b,b),(b,c),(c,a),(d,a),(d,c)\}$.
Arc $(b,b)$ is called a {\it loop},  and we say that vertex $b$ has a loop on it.
% a loop since $(2,2)$ is an arc in $D$.
Given a
digraph $D = (V,A)$, a digraph $H = (V_1, A_1)$ is called a {\it subdigraph} of $D$ if $V_1\subseteq V$ and $A_1\subseteq A$ (see Figure~\ref{digraph}(b)).

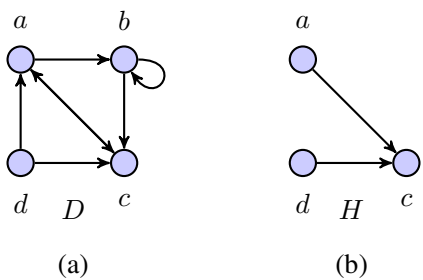
\begin{figure}[H]
    \begin{center}
        \begin{tikzpicture}[->,>=stealth',shorten >=0pt,auto,node distance=3cm,thick,
        every node/.style={circle,fill=blue!20,draw
            %    ,font=\sffamily\small\bfseries
        }]
        \node[label=above:$a$] (a) {};
        \node[label=above:$b$] (b) [right = 1cm of a] {};
        \node[label=below:$c$] (c) [below = 1cm of b] {};
        \node[label=below:$d$] (d) [below = 1cm of a] {};

        \node[draw=none,fill=none,rectangle] at (.7,-2) {$D$};

        \node[draw=none,fill=none,rectangle] at (.7,-2.75) {(a)};

        \path[every node/.style={font=\sffamily\small}]
        (a) edge node [left] {} (b);
        \path[every node/.style={font=\sffamily\small}]
        (b) edge node [left] {} (c);
        \path[every node/.style={font=\sffamily\small}]
        (d) edge node [left] {} (c);
        \path[every node/.style={font=\sffamily\small}]
        (d) edge node [left] {} (a);
        \path[every node/.style={font=\sffamily\small}]
        (a) edge node [left] {} (c);
        \path[every node/.style={font=\sffamily\small}]
        (c) edge node [left] {} (a);
         \draw (b) to [out=0,in=300,looseness=9] (b);

        \node[label=above:$a$] (a1)[right = 2 cm of b] {};
        \node[label=below:$d$] (d1) [below = 1cm of a1] {};
        \node[label=below:$c$] (c1) [right = 1cm of d1] {};

        \node[draw=none,fill=none,rectangle] at (4.4,-2) {$H$};
        \node[draw=none,fill=none,rectangle] at (4.4,-2.75) {(b)};

        \path[every node/.style={font=\sffamily\small}]
        (a1) edge node [left] {} (c1);
        \path[every node/.style={font=\sffamily\small}]
        (d1) edge node [left] {} (c1);
        \end{tikzpicture}
        \caption{Digraph $D$ and its subdigraph $H$.}
        \label{digraph}
    \end{center}
\end{figure}

We shall be interested in a certain type of digraphs. 
For any positive prime $p$,  and any positive integers $m,n$,
we define the directed graph $D(p;m,n) = (V,A)$, 
with vertex set
$V = \ff{p}\times\ff{p}$ and arc set $A$ as follows: the ordered pair of vertices
$((x_1,x_2), (y_1,y_2))$ is an arc if
\[
x_2 + y_2 = x_1^m y_1^n.
\]

We call $D(p;m,n)$ a {\it monomial digraph}.  It is known (and often referred to as Fermat's little  
theorem), that $x^p = x$ for any $x\in\ff{p}$.
It is therefore sufficient to restrict integers $m$ and $n$ in
the definition of $D(p;m,n)$  to the set $\{1,\dotso,p-1\}$.
We thus have $(p-1)^2$ digraphs $D(p; m,n)$ for every prime $p$. 

The digraphs $D(p;m,n)$ are directed analogues
(see Kodess \cite{kod14}, Kodess and Lazebnik \cite{KL2015}) 
of particular cases of a well
studied class of algebraically defined undirected graphs having many
applications, see surveys by 
Lazebnik and Woldar
\cite{laz01} and Lazebnik, Sun, and Wang \cite{lazsun16}.

Figure \ref{dig_312} shows $D(3;1,2)$. 
Note that $((2,2), (1,0))$  is an arc,  since according to the adjacency condition above,  $2+0=2^1\cdot 1^2$ in $\F_3$;
$((1,0), (2,2))$  is not  an arc,  since  $0+2\neq  1^1\cdot 2^2$ in $\F_3$; and
  vertex $(1,2)$ has a loop on it
since,
$2 + 2 = 1\cdot 1^2$ in $\ff{3}$.

\begin{figure}[H]
    \begin{center}
        \begin{tikzpicture}

        \tikzset{vertex/.style = {shape=circle,draw,inner sep=2pt,minimum size=.5em, scale = 1.0},font=\sffamily\scriptsize\bfseries}
        \tikzset{edge/.style = {->,> = triangle 45}}
        \node[vertex] (a) at  (2,0) {$(1,0)$};
        \node[vertex] (b) at  (5,0) {$(0,0)$};
        \node[vertex] (c) at  (8,0) {$(2,0)$};

        \node[vertex] (d) at  (0,-2) {$(2,1)$};
        \node[vertex] (e) at  (2,-2) {$(1,1)$};
        \node[vertex] (f) at  (8,-2) {$(2,2)$};
        \node[vertex] (g) at  (10,-2) {$(1,2)$};

        \node[vertex] (h) at  (2,-4) {$(0,2)$};
        \node[vertex] (i) at  (8,-4) {$(0,1)$};

        \draw[edge] (a) to (b);
        \draw[edge] (a) to (d);
        \draw[edge] (a) to (e);

        \draw[edge] (b) to (a);
        \draw[edge] (b) to (c);

        \draw[edge] (c) to (b);
        \draw[edge] (c) to (f);
        \draw[edge] (c) to (g);

        \draw[edge] (d) to (e);
        \draw[edge] (d) to (h);

        \draw[edge] (e) to (a);
        \draw[edge] (e) to (c);
        \draw[edge] (e) to (h);

        \draw[edge] (f) to (a);
        \draw[edge] (f) to (c);
        \draw[edge] (f) to (i);

        \draw[edge] (g) to (f);
        \draw[edge] (g) to (i);

        \draw[edge] (h) to (d);
        \draw[edge] (h) to (e);
        \draw[edge] (h) to (i);

        \draw[edge] (i) to (f);
        \draw[edge] (i) to (g);
        \draw[edge] (i) to (h);

        \path
        (b) edge [->,>={triangle 45[flex,sep=0pt]},loop,out=240,in=270,looseness=6.5] node {} (b);
        \path
        (d) edge [->,>={triangle 45[flex,sep=0pt]},loop,out=160,in=130,looseness=8] node {} (d);
        \path
        (g) edge [->,>={triangle 45[flex,sep=0pt]},loop,out=20,in=50,looseness=8] node {} (g);
        \end{tikzpicture}
        \caption{The digraph $D(3;1,2)$: $x_2+y_2 = x_1y_1^2$.}
        \label{dig_312}
    \end{center}
\end{figure}
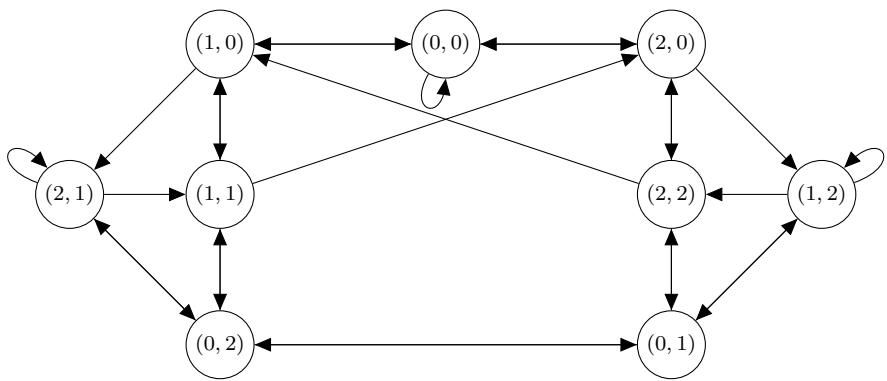

As all these $(p-1)^2$ digraphs $D(p;m,n)$ share the same vertex set, one cannot
help wondering if they are actually different. For instance,
it is not hard to see that $D(3; 1,2)$ and $D(3; 2,1)$ can be
obtained one from the other by reversing the orientation of every arc,
but {\it not} by relabeling the vertices! 
The reason for this will become more clear later. 
A very thorough and tedious inspection or any modern computer would reveal that
the digraphs $D(5; 1,2)$ and $D(5; 3,2)$, both having 25 vertices,
are in fact not much different: one can be obtained from the other
by relabeling the vertices in a certain way.

We would like to make this discussion a little more formal.

Central to many areas of mathematics is the concept of {\it isomorphism}.
It is defined for such ubiquitous and important objects as vector spaces, groups, fields and
graphs, to name just a few.
Informally, two objects are called isomorphic if they are not
fundamentally different in their structure; that is, one of them
can be obtained from the other by renaming or relabeling the elements
while preserving the internal structure.
Formally, we call digraphs $D_1$ and $D_2$ isomorphic and write $D_1 \cong D_2$
if there is a bijective function $f$
from the vertex set $V(D_1)$ of $D_1$ to the vertex set $V(D_2)$ of $D_2$ such
that for any two vertices $u,v\in V(D_1)$, $(u,v)$ is an arc in $D_1$ if and only if
$(f(u),f(v))$ is an arc in $D_2$. 
That is, $f$ preserves adjacency and non-adjacency mapping vertices of $D_1$ to those
of $D_2$. Such a mapping $f$ is called an {\it isomorphism from
$D_1$ to $D_2$}. To illustrate this idea we refer to Figure \ref{dig_iso}.
\begin{figure}[h]
    \captionsetup[subfigure]{justification=centering}
    \begin{center}
        \begin{subfigure}[b]{0.4\textwidth}
            \centering
            \hspace*{\fill}
            \begin{tikzpicture}[->,>=stealth',shorten >=0pt,auto,node distance=3cm,thick,
            every node/.style={circle,fill=blue!20,draw,font=\sffamily\small\bfseries}]
            \node[label=above:$1$] (1) {};
            \node[label=above:$2$] (2) [right = 1cm of 1] {};
            \node[label=below:$3$] (3) [below = 1cm of 2] {};
            \node[label=below:$4$] (4) [below = 1cm of 1] {};

            \node[draw=none,fill=none,rectangle] at (.7,-2) {$D_1$};

            \path[every node/.style={font=\sffamily\small}]
            (1) edge node [left] {} (2);
            \path[every node/.style={font=\sffamily\small}]
            (2) edge node [left] {} (3);
            \path[every node/.style={font=\sffamily\small}]
            (3) edge node [left] {} (4);
            \path[every node/.style={font=\sffamily\small}]
            (4) edge node [left] {} (1);
            \end{tikzpicture}
            \hfill
            \begin{tikzpicture}[->,>=stealth',shorten >=0pt,auto,node distance=3cm,thick,
            every node/.style={circle,fill=blue!20,draw,font=\sffamily\small\bfseries}]
            \node[label=above:$a$] (a) {};
            \node[label=above:$c$] (c) [right = 1cm of 1] {};
            \node[label=below:$b$] (b) [below = 1cm of 2] {};
            \node[label=below:$d$] (d) [below = 1cm of 1] {};

            \node[draw=none,fill=none,rectangle] at (0.7,-2) {$D_2$};

            \path[every node/.style={font=\sffamily\small}]
            (a) edge node [left] {} (b);
            \path[every node/.style={font=\sffamily\small}]
            (b) edge node [left] {} (c);
            \path[every node/.style={font=\sffamily\small}]
            (c) edge node [left] {} (d);
            \path[every node/.style={font=\sffamily\small}]
            (4) edge node [left] {} (1);
            \end{tikzpicture}
            \hspace*{\fill}
            \caption{Two isomorphic digraphs.}
        \end{subfigure}
        \hspace{1cm}
        \begin{subfigure}[b]{0.4\textwidth}
            \centering
            \hspace*{\fill}
            \begin{tikzpicture}[->,>=stealth',shorten >=0pt,auto,node distance=3cm,thick,
            every node/.style={circle,fill=blue!20,draw,font=\sffamily\small\bfseries}]
            \node[label=above:$1$] (1) {};
            \node[label=above:$2$] (2) [right = 1cm of 1] {};
            \node[label=below:$3$] (3) [below = 1cm of 2] {};
            \node[label=below:$4$] (4) [below = 1cm of 1] {};

            \node[draw=none,fill=none,rectangle] at (.7,-2) {$D_1$};

            \path[every node/.style={font=\sffamily\small}]
            (1) edge node [left] {} (2);
            \path[every node/.style={font=\sffamily\small}]
            (2) edge node [left] {} (3);
            \path[every node/.style={font=\sffamily\small}]
            (3) edge node [left] {} (4);
            \path[every node/.style={font=\sffamily\small}]
            (4) edge node [left] {} (1);
            \end{tikzpicture}
            \hfill
            \begin{tikzpicture}[->,>=stealth',shorten >=0pt,auto,node distance=3cm,thick,
            every node/.style={circle,fill=blue!20,draw,font=\sffamily\small\bfseries}]
            \node[label=above:$a$] (a) {};
            \node[label=above:$c$] (c) [right = 1cm of 1] {};
            \node[label=below:$b$] (b) [below = 1cm of 2] {};
            \node[label=below:$d$] (d) [below = 1cm of 1] {};

            \node[draw=none,fill=none,rectangle] at (0.7,-2) {$D_3$};

            \path[every node/.style={font=\sffamily\small}]
            (a) edge node [left] {} (b);
            \path[every node/.style={font=\sffamily\small}]
            (b) edge node [left] {} (c);
            \path[every node/.style={font=\sffamily\small}]
            (c) edge node [left] {} (d);
            \path[every node/.style={font=\sffamily\small}]
            (a) edge node [left] {} (d);
            \end{tikzpicture}
            \hspace*{\fill}
            \caption{Two non-isomorphic digraphs.}
        \end{subfigure}
    \end{center}
\caption{The concept of isomorphism of digraphs.}
\label{dig_iso}
\end{figure}
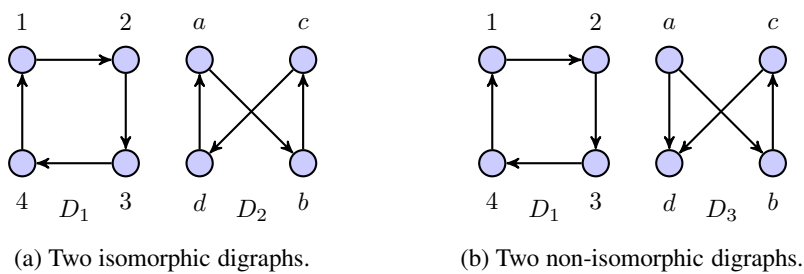

The two digraphs
on the left, $D_1$ and $D_2$, are isomorphic because the mapping
defined as $f(1) = a$, $f(2) = b$, $f(3) = c$, $f(4) = d$, is clearly a bijection; and
it is a routine verification to check that $f$ preserves adjacency and non-adjacency.
For instance, $(2,3)$ is an arc in $D_1$, and its image $(f(2),f(3)) = (b,c)$
is an arc in $D_2$, whereas $(1,3)$ and its image $(f(1),f(3)) = (a,c)$
are not arcs in $D_1$ and $D_2$, respectively.

The reader should be convinced that not only arcs but all
digraph-theoretic properties (that is, those independent of the actual labeling of the vertices)
are shared by two isomorphic digraphs.  For example,  if $g$ is an isomorphism from a  digraph $H_1$ to a digraph  $H_2$,  then every  vertex $x$  of $H_1$ and the vertex $g(x)$ of $H_2$ have the same number of in-going arcs,  and the same number of out-going arcs. This observation helps
establishing the fact that the two digraphs on
the right in Figure~\ref{dig_iso}, $D_1$ and $D_3$,
are not isomorphic: in $D_3$ vertex $a$ has two out-going
arcs, whereas $D_1$ has no vertex with this property. Other properties shared by isomorphic (di)graphs include the number of (directed) cycles of a given length, the total number of (directed) cycles, the number of (strong) components, etc. 
Note that simply reversing the orientation of every arc in a digraph 
may or may not produce a digraph isomorphic to the original one. See Figure~\ref{arc_rev}. 

%% sqrt(5)/2 -- distance used in one of the pics
\newlength{\mylength}
\setlength{\mylength}{1.118034cm} 
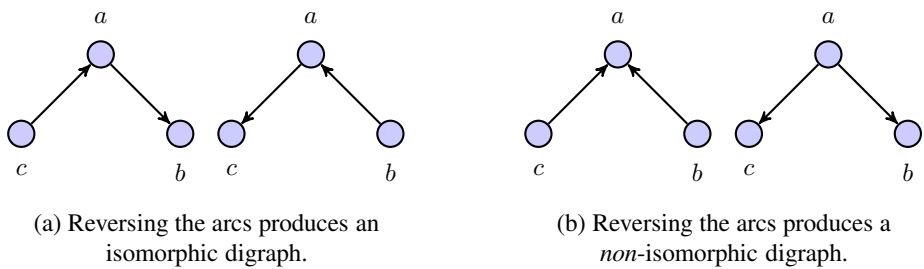
\begin{figure}[h]
    \captionsetup[subfigure]{justification=centering}
    \begin{center}
        \begin{subfigure}[b]{0.45\textwidth}
            \centering
            \hspace*{\fill}
            \begin{tikzpicture}[->,>=stealth',shorten >=0pt,auto,node distance=3cm,thick,
            every node/.style={circle,fill=blue!20,draw,font=\sffamily\small\bfseries}]
            \node[label=above:$a$] (a) {};
            \node[label=below:$b$] (b) [below right = \mylength of 1] {};
            \node[label=below:$c$] (c) [below left = \mylength of 1] {};
            
            %\node[draw=none,fill=none,rectangle] at (.7,-2) {$D_1$};
            
            \path[every node/.style={font=\sffamily\small}]
            (c) edge node [left] {} (a);
            \path[every node/.style={font=\sffamily\small}]
            (a) edge node [left] {} (b);
            \end{tikzpicture}
            \hfill
            \begin{tikzpicture}[->,>=stealth',shorten >=0pt,auto,node distance=3cm,thick,
            every node/.style={circle,fill=blue!20,draw,font=\sffamily\small\bfseries}]
            \node[label=above:$a$] (a) {};
            \node[label=below:$b$] (b) [below right = \mylength of 1] {};
            \node[label=below:$c$] (c) [below left = \mylength of 1] {};
            
            %\node[draw=none,fill=none,rectangle] at (0.7,-2) {$D_2$};
            
            \path[every node/.style={font=\sffamily\small}]
            (a) edge node [left] {} (c);
            \path[every node/.style={font=\sffamily\small}]
            (b) edge node [left] {} (a);
            \end{tikzpicture}
            \hspace*{\fill}
            %\caption{Two isomorphic digraphs.}
            \caption{Reversing the arcs produces an isomorphic digraph.}
        \end{subfigure}
        \hspace{1cm}
        \begin{subfigure}[b]{0.45\textwidth}
            \centering
            \hspace*{\fill}
            \begin{tikzpicture}[->,>=stealth',shorten >=0pt,auto,node distance=3cm,thick,
            every node/.style={circle,fill=blue!20,draw,font=\sffamily\small\bfseries}]
            \node[label=above:$a$] (a) {};
            \node[label=below:$b$] (b) [below right = \mylength of 1] {};
            \node[label=below:$c$] (c) [below left = \mylength of 1] {};
            
            %\node[draw=none,fill=none,rectangle] at (.7,-2) {$D_1$};
            
            \path[every node/.style={font=\sffamily\small}]
            (b) edge node [left] {} (a);
            \path[every node/.style={font=\sffamily\small}]
            (c) edge node [left] {} (a);
            \end{tikzpicture}
            \hfill
            \begin{tikzpicture}[->,>=stealth',shorten >=0pt,auto,node distance=3cm,thick,
            every node/.style={circle,fill=blue!20,draw,font=\sffamily\small\bfseries}]
            \node[label=above:$a$] (a) {};
            \node[label=below:$b$] (b) [below right = \mylength of 1] {};
            \node[label=below:$c$] (c) [below left = \mylength of 1] {};
            
            %\node[draw=none,fill=none,rectangle] at (0.7,-2) {$D_2$};
            
            \path[every node/.style={font=\sffamily\small}]
            (a) edge node [left] {} (b);
            \path[every node/.style={font=\sffamily\small}]
            (a) edge node [left] {} (c);
            \end{tikzpicture}
            \hspace*{\fill}
            %\caption{Two non-isomorphic digraphs.}
            \caption{Reversing the arcs produces a {\it non}-isomorphic digraph.}
        \end{subfigure}
    \end{center}
    \caption{Reversing the arcs of a digraph.}
    \label{arc_rev}
\end{figure}

Suppose one has a large set of digraphs and wants to find all  its members with a particular property.   Every member of the set can be considered and checked for having the  property, but, as isomorphic digraphs possess  the property simultaneously,   it is sufficient to check only one of them.   So only one member of a class of isomorphic digraphs can be considered.  Therefore,  if one has an efficient way for establishing isomorphism of digraphs from the set,   the original set of digraphs  can be replaced by a smaller subset of it  (and often much smaller) consisting of one ``representative'' of each  class of isomorphic graphs,  and the property can be checked only for digraphs from this subset.  This approach becomes  even more efficient when we wish to check multiple properties for digraphs from the original set.  Once its members are ``sorted for  isomorphism'',  every property can be checked for only one representative of each isomorphic class.  Unfortunately,  establishing isomorphism between  digraphs is often not easy.

Asking whether two objects are isomorphic and searching
for effective computational tools for answering this question
has been the subject of a number of highly publicized
mathematical endeavors in the 20th century. The reader may have heard of
the Classification of Finite Simple Groups problem that
seeks to give a complete list of such groups up to isomorphism;
see expositions by Solomon \cite{Sol_FSG, Sol_hist}.
Another example
is the Graph Isomorphism Problem
 which  is concerned with finding fast algorithms for  determining whether
two finite graphs are isomorphic. For recent progress on this problem, see
Babai \cite{babai}.

The question of isomorphism of two monomial digraphs
$D_1=D(p;m_1,n_1)$ and $D_2=D(p;m_2,n_2)$ is open, and it is this question
that originally motivated us. In an attempt to answer this question
one would seek necessary and sufficient conditions on the parameters
$m_1,n_1,m_2,n_2$ under which the two digraphs $D_1$ and $D_2$
are isomorphic.
One idea to attack this problem is to look at various subdigraphs
of $D_1$ and $D_2$.

Let $X$ and $Y$ be arbitrary digraphs, and let $N(X,Y)$ denote the number of subdigraphs of $X$ each of which is isomorphic to $Y$.
In trying to decide whether two given digraphs $X_1$ and $X_2$ are isomorphic
one could hope to find a ``test digraph''
$Y$ such that $N(X_1,Y) = N(X_2,Y)$ if and only if $X_1\cong X_2$.
This approach was successful in the case of a certain class 
of undirected
graphs, see
Dmytrenko, Lazebnik, and Viglione
\cite{dmy05}.
In attempting to replicate this success for the class of monomial digraphs, we were led to
consider the digraph $K$ of Figure \ref{dig_K}.
We must admit at this point that $K$ was not a good test digraph: much to
our regret, we discovered a great many pairs of non-isomorphic monomial digraphs $D_1$ and $D_2$ containing
the same number of (isomorphic) copies of $K$. Counting $N(D(p;m,n), K)$,
however, led to the 
 result on the number of roots of certain polynomials
over finite fields that we have already mentioned.  Let us  present  our solution.

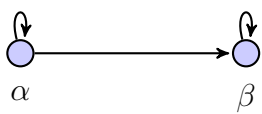
\begin{figure}[h]
\centering
\begin{tikzpicture}[->,>=stealth',shorten >=1pt,auto,node distance=3cm,thick,
every node/.style={circle,fill=blue!20,draw,font=\sffamily\Large\bfseries}]

  \node[label=below:$\alpha$] (1) {};
  \node[label=below:$\beta$] (2) [right of=1] {};

  \path[every node/.style={font=\sffamily\small}]
    (1) edge node [left] {} (2)
        edge [loop above] node {} (1)
    (2) edge [loop above] node {} ();
\end{tikzpicture}
\caption{The digraph $K$.}
\label{dig_K}
\end{figure}
\begin{thm}
For any odd prime $p$ and any natural numbers $m$ and $n$ satisfying
$mn\equiv 1\bmod (p-1)$,  the trinomials
$X^{m+1}-2X+1$ and $X^{n+1}-2X+1$   have the same number of distinct roots in $\ff{p}$.
\end{thm}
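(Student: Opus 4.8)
The plan is to exhibit an explicit bijection between the two root sets, built from the observation that the congruence $mn\equiv 1\bmod(p-1)$ forces the maps $x\mapsto x^m$ and $x\mapsto x^n$ to be mutually inverse permutations of $\ff{p}^{*}=\ff{p}\setminus\{0\}$. First I would dispose of the point $x=0$: since $0^{m+1}-2\cdot 0+1=1\neq 0$, neither trinomial has $0$ as a root, so every root of either trinomial lies in $\ff{p}^{*}$. This is exactly what makes it legitimate to work inside the multiplicative group and to take inverses and fractional powers freely.

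The central tool is Fermat's little theorem, which gives $x^{p-1}=1$ for all $x\in\ff{p}^{*}$. Writing $mn=1+k(p-1)$ with $k\geq 0$, we obtain $x^{mn}=x$ for every $x\in\ff{p}^{*}$, so $x\mapsto x^m$ and $x\mapsto x^n$ are inverse bijections of $\ff{p}^{*}$. I would then verify that the substitution $s=x^{-m}$ carries roots of $X^{m+1}-2X+1$ to roots of $X^{n+1}-2X+1$: if $x\in\ff{p}^{*}$ satisfies $x^{m+1}-2x+1=0$ and we set $s=x^{-m}$, then $s^{n}=x^{-mn}=x^{-1}$, whence $s^{n+1}=x^{-(m+1)}$; multiplying $s^{n+1}-2s+1$ by the nonzero quantity $x^{m+1}$ produces $1-2x+x^{m+1}$, which vanishes by hypothesis. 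Hence $s$ is a root of the second trinomial.

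Because interchanging the roles of $m$ and $n$ turns $x\mapsto x^{-m}$ into its two-sided inverse $s\mapsto s^{-n}$ (indeed $s^{-n}=x^{mn}=x$), the map $x\mapsto x^{-m}$ is a bijection from the root set of $X^{m+1}-2X+1$ onto the root set of $X^{n+1}-2X+1$, and comparing cardinalities yields the claimed equality. The step I expect to be the real content — the rest is routine verification — is recognizing the correct substitution. It is perhaps most transparent as the two-stage passage $x\mapsto x^{m}\mapsto x^{-m}$, the first exponentiation recasting the equation as $t^{n+1}-2t^{n}+1=0$ and the subsequent inversion $t\mapsto t^{-1}$ reversing the coefficient pattern to give $X^{n+1}-2X+1$. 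I would emphasize that this bijection matches distinct roots only and need not preserve multiplicities, which is precisely what the statement asserts; in the worked example $x=8$ is a double root for $m=3$, whereas its image under $x\mapsto x^{-3}$ is a simple root for $n=7$.
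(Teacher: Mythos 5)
Your proof is correct, but it takes a genuinely different route from the paper's. You work directly in $\ff{p}$: after noting that $0$ is a root of neither trinomial, you use $x^{mn}=x$ on $\ff{p}^{*}$ to show that $x\mapsto x^{-m}$ carries roots of $X^{m+1}-2X+1$ to roots of $X^{n+1}-2X+1$, with two-sided inverse $s\mapsto s^{-n}$; the key computation $x^{m+1}\bigl(s^{n+1}-2s+1\bigr)=x^{m+1}-2x+1$ checks out, as does your two-stage description via $t=x^m$ and inversion reversing the coefficient pattern. The paper instead reaches the conclusion through graph theory: it proves the monomial digraphs $D(p;1,m)$ and $D(p;n,1)$ are isomorphic via $(x,y)\mapsto(x^m,y)$, relates $D(p;n,1)$ to $D(p;1,n)$ by arc reversal, and counts copies of a fixed two-vertex digraph $K$ in each, that count being $(p-1)$ times the number of roots of the trinomial other than the obvious root $1$. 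The digraph detour is the whole point of the article --- an instance of graph theory yielding an algebraic fact --- whereas your substitution is precisely the ``direct'' proof the authors invite the reader to find; it is more elementary, and the same idea adapts to the generalization $X^{m+1}+aX+b$ versus $X^{n+1}+aX+b^{m}$ stated in the paper's Exercise, which the specific count of copies of $K$ does not obviously give. Your closing observation that the bijection preserves distinctness but not multiplicity is accurate and consistent with the worked example for $p=11$.
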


\begin{proof}
Set $D_m=D(p;1,m)$, $D_n=D(p;1,n)$ and $D_n'=D(p;n,1)$.
We first argue that $D_m$ and $D_n'$ are isomorphic, and as such, contain
the same number of isomorphic copies of $K$ shown in Figure~\ref{dig_K}.

As $mn\equiv 1\bmod (p-1)$ 
implies 
$n\cdot m - t\cdot(p-1) = 1$ for
some integer $t$, we conclude that $\gcd(m,p-1) = 1$.

We now recall that the multiplicative group $\ff{p}^{\ast}$ of $\ff{p}$ is cyclic
of order $p-1$. This implies by elementary theory of cyclic
groups that $x\mapsto x^m$ is a permutation (bijective function) on $\ff{p}$.
Also for any integers $k,l$ and any $x\in\ff{p}$,
$k\equiv l \bmod (p-1)$ implies $x^k = x^l$.
Proofs of these facts can be found in \cite{ireland}.

Consider the mapping $\psi: V(D_m) \to V(D_n')$ defined by
$\psi((x,y)) = (x^m,y)$. We verify that $\psi$ satisfies the
definition of digraph isomorphism discussed above.
Clearly $\psi$ is a permutation on
$\ff{p}^2 = V(D_m) = V(D_n')$.
Suppose $((x_1,x_2),(y_1,y_2))$ is an arc in $D_m$, that is,
\[
x_2 + y_2 = x_1 y_1^m.
\]
Then its image
$
(\psi((x_1,x_2)),\psi((y_1,y_2)))
=
((x_1^m,x_2), (y_1^m,y_2))
$
is an arc in $D_n'$ since
\[
x_2 + y_2 = x_1^1 y_1^m = x_1^{mn} y_1^m = (x_1^{m})^n (y_1^m)^1.
\]
Similarly, we show that $\psi$ preserves non-adjacency: if $((x_1,x_2),(y_1,y_2))$ is not an arc in $D_m$, then 
\[
x_2 + y_2 \neq x_1^1 y_1^m, 
\]
and so 
$
(\psi((x_1,x_2)),\psi((y_1,y_2)))$ is not 
an arc in $D_n'$. 
This implies by definition that $D_m$ and $D_n'$ are isomorphic.
Hence, $N(D_m, Y) = N(D_n', Y)$ for any digraph $Y$. 
In particular, $N(D_m, K) = N(D_n', K)$.

Now let $H^c$ denote the {\it converse} of digraph $H$, that is, the digraph
obtained from $H$ by reversing all its arcs.
Obviously, for any digraph $D$, $N(D,H) = N(D^c, H^c)$,
and also $(H^c)^c = H$.
Observe that $D_n'$ is simply $D_n$ with all arcs reversed, that is
$D_n' = D_n^c$.
Thus $D_n'^c$ and $(D_n^c)^c = D_n$ are equal.
Since $K^c\cong K$, we have
\[
N(D_m, K)=N(D_n', K)=N(D_n'^c, K^c) = N(D_n'^c, K) = N(D_n,K).
\]
Note that we did not assume that $D_m$ and $D_n$ were isomorphic! Actually
we conjecture that they never are unless $m = n$.

We now show that the number of isomorphic copies of $K$ in a digraph $D_n$
can be expressed as the number of distinct roots of a polynomial of degree $n+1$ in the field
$\ff{p}$.

Suppose $K$ is a subgraph of $D_n=D(p;1,n)$,  and let $\alpha =(u,s)$
and $\beta = (v,t)$  be vertices of $K$. 
From the relations defining the three arcs of $K$, we have
$$s+s= u\cdot u^n,\;\; t+t= v\cdot v^n,\;\;\text{and}\;\; s+t=uv^n. $$
Note that since $p$ is odd, $2\in\ff{p}$ is invertible. Hence,  we obtain 

\begin{equation}
\label{digraphKadj}
s=\frac{1}{2}u^{n+1},\;\; 
t=\frac{1}{2}v^{n+1},\;\; \text{and} \;
s+t=uv^n.
\end{equation}

If $u=v$, then  the first and the second equation of system 
 (1) imply  $s=t$,  and so vertices $\alpha$ and $ \beta$ are equal.  Therefore,  $u\ne v$.

It follows from (1) that the equation $s+t=uv^n$ can be rewritten as
\begin{equation}\label{uv}
\frac{1}{2}u^{n+1} + \frac{1}{2}v^{n+1} =uv^n.
   \end{equation}
   Note that neither $u$ nor $v$ is $0$. Indeed,  if $u=0$, then substituting it to the first and to the third equation of the system 
   %(\ref{st})
   (1), we get $s=0$ and $s+t=0$. Hence, $t=0$, and from the second equation we get $v=0$. Hence, $\alpha=\beta =(0,0)$ --- a contradiction.  Therefore, $u\ne 0$.  Similarly, $v\ne 0$, and $uv\ne 0$.   Dividing  both sides of
   %(\ref{st1}) 
   (2) by $(1/2)uv^n$, we obtain  $$(u/v)^{n} + (v/u) = 2.$$  Setting $w=u/v$, we rewrite this equation as $w^{n+1}-2w+1=0$.
Hence,  $u/v$ is a root of the polynomial $f_n(X)=X^{n+1}-2X+1\in\mathbb{F}_p[X]$. 
It is not equal to the obvious root $1$, as $u\ne v$.

Consequently, $N(D_n,K)=(p-1)R(n)$, where $R(n)$ is the number of distinct roots of $f_n$
in $\ff{p}\setminus\{1\}$; any choice of root and any choice of $u$ must
determine $\alpha$ and $\beta$ uniquely.

Now if $m$ and $n$ are integers satisfying the conditions of the theorem, we have
from before that
\[
(p-1) R(m) = N(D_m, K) = N(D_n, K) = (p-1) R(n),
\]
and so $R(m) = R(n)$.
\qed
\end{proof}

Thus, from an isomorphism problem for digraphs, we have arrived at an interesting fact
concerning trinomials over finite fields.
The theorem can be immediately generalized in various ways and proved directly, i.e., without considering graphs or digraphs. We suggest that the reader find a proof for the following generalization.
\begin{exer}
For any prime power $q$ (even or odd) and any natural numbers $m$ and $n$ satisfying
$mn\equiv 1\bmod (q-1)$,
polynomials $X^{m+1} + aX + b$ and $X^{n+1}+aX+b^m$ have the same number of distinct 
roots in the finite field $\ff{q}$ for any $a,b\in\ff{q}$.
\end{exer}

We end this note with two open
questions concerning monomial digraphs which we find very interesting. Though we do not know the answers even for prime $q$, we state the questions for any prime power $q$.
%designed to pique the reader's interest.
Let $D_1 = D(q; m_1,n_1)$ and $D_2=D(q; m_2,n_2)$.

\begin{problem}
Is there a digraph $H$ 
such that the equality
$N(D_1, H) = N(D_2, H)$ 
%for every $H\in \Hset$ 
is equivalent to
$D_1\cong D_2$?
\end{problem}

\begin{problem}
Find necessary and sufficient conditions on $q,m_1,n_1, m_2, n_2$, such that
digraphs $D_1$ and $D_2$ are isomorphic. 
\end{problem}
A related conjecture appears in \cite{kod14}:
\begin{conjecture}
\label{main_conj}
Let $q$ be a prime power, and let $m_1,n_1,m_2,n_2$ be integers from $\{1,2,\ldots, ,q-1\}$.
Then  $D(q; m_1,n_1)\cong D(q; m_2,n_2)$  if and only if there exists an integer $k$,
coprime with $q-1$ such that
\[
m_2 \equiv k m_1 \mod (q-1) \; \;\text{and}\; \;
n_2 \equiv k n_1 \mod (q-1).
\]
\end{conjecture}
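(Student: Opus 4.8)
The conjectured characterisation has two directions, and they are of very different difficulty. The plan is to dispose of sufficiency by an explicit construction and to frame necessity as a rigidity problem, which is where essentially all of the difficulty lies.

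For the \emph{sufficiency} direction, suppose $k$ is coprime to $q-1$ and $m_2\equiv k m_1$, $n_2 \equiv k n_1 \bmod(q-1)$. Let $a$ be an inverse of $k$ modulo $q-1$; since $\gcd(k,q-1)=1$ such an $a$ exists and is again coprime to $q-1$, so $x\mapsto x^a$ is a permutation of $\F_q$. I would take the map
\[
\phi\colon (x,y)\longmapsto (x^a,y),
\]
exactly as in the proof of the Theorem, and check that it carries arcs of $D(q;m_1,n_1)$ to arcs of $D(q;m_2,n_2)$: an arc means $x_2+y_2 = x_1^{m_1}y_1^{n_1}$, and since $a m_2 \equiv m_1$ and $a n_2 \equiv n_1 \bmod (q-1)$ one has $(x_1^a)^{m_2}(y_1^a)^{n_2} = x_1^{m_1}y_1^{n_1}$ for all $x_1,y_1\in\F_q$ (the exponents $m_i,n_i$ being positive handles the values $x_1=0$ or $y_1=0$). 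The same identity run backwards shows $\phi$ preserves non-adjacency, so $\phi$ is an isomorphism. I would also note that the coordinatewise Frobenius $(x,y)\mapsto (x^p,y^p)$ is an automorphism of every $D(q;m,n)$ (because $(x_2+y_2)^p=(x_1^my_1^n)^p$ is equivalent to the original relation), so field automorphisms are already absorbed into $\mathrm{Aut}(D(q;m,n))$ and need not enlarge the isomorphism classes; this is why only the scaling by $k$ appears in the statement.

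The \emph{necessity} direction is the real content, and I expect it to be the main obstacle: one must show that \emph{every} isomorphism is, up to automorphisms, of the above algebraic form. My plan would be a rigidity argument in three stages. First, I would record that every vertex has in-degree and out-degree exactly $q$ (fixing $(x_1,x_2)$, the relation $x_2+y_2 = x_1^m y_1^n$ determines $y_2$ from $y_1$, giving $q$ out-neighbours, and symmetrically $q$ in-neighbours), so degrees carry no information and a finer invariant is needed. Second --- and this is the key structural lemma --- I would try to show that the partition of $V=\F_q\times\F_q$ into \emph{fibers} $\{x_1=c\}$ is recognisable from the digraph alone, hence preserved by any isomorphism. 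The natural handle is that two vertices $(c,x_2)$ and $(c,x_2')$ in a common fiber have out-neighbourhoods that are vertical translates of one another (the graphs of $y_1\mapsto c^m y_1^{n}-x_2$ and $y_1\mapsto c^m y_1^{n}-x_2'$ differ by the constant $x_2'-x_2$), and I would look for a purely combinatorial description of this ``parallelism'' of neighbourhoods that an isomorphism must respect.

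Granting that fibers are preserved, the third stage is to read off the parameters. An isomorphism then induces a bijection $\bar\phi$ of the first-coordinate set $\F_q$ together with compatible affine-type maps on the second coordinates, and the functional equation $x_2+y_2 = x_1^m y_1^n$ forces $\bar\phi$ to intertwine the monomials $x_1^{m_1}y_1^{n_1}$ and $x_1^{m_2}y_1^{n_2}$; using that $\F_q^{\ast}$ is cyclic, this should pin $\bar\phi$ down to a power map $c\mapsto \sigma(c)^k$ (composed with a field automorphism $\sigma$) and yield $m_2\equiv k m_1$, $n_2\equiv k n_1 \bmod(q-1)$ with $\gcd(k,q-1)=1$. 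The hard part is unquestionably the second stage: proving there are no \emph{exotic}, non-fiber-preserving isomorphisms. I would expect genuine trouble in degenerate regimes --- for instance when $\gcd(m,q-1)$ or $\gcd(n,q-1)$ is large, or when $m=n$, where extra symmetry or collapsing of neighbourhoods could create isomorphisms of a shape not anticipated by the conjecture --- and it is precisely the absence of such sporadic exceptions that remains, at present, unproven.
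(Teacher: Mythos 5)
The statement you are addressing is stated in the paper as an open conjecture (attributed to \cite{kod14}); the paper supplies no proof of it, so there is nothing to compare your argument against except the paper's proof of its Theorem, whose technique you correctly reuse. Your sufficiency argument is sound and complete: with $a\equiv k^{-1}\bmod (q-1)$ the map $(x,y)\mapsto (x^a,y)$ is a bijection of $\ff{q}^2$, and the identity $x_1^{am_2}y_1^{an_2}=x_1^{m_1}y_1^{n_1}$ (valid for all $x_1,y_1$, including $0$, because all exponents are positive) shows it carries the arc relation of $D(q;m_1,n_1)$ exactly onto that of $D(q;m_2,n_2)$. This is precisely the construction the paper uses to show $D(p;1,m)\cong D(p;n,1)$ in its Theorem, so the ``if'' direction is correct and is essentially the paper's own device, generalized in the natural way. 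Your observations that Frobenius is an automorphism and that every vertex has in- and out-degree $q$ are also correct.

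The genuine gap is the entire necessity direction, and your own text concedes this: stages two and three of your plan are a program, not a proof. The pivotal claim --- that the fiber partition $\{x_1=c\}$ is reconstructible from the abstract digraph, so that every isomorphism is fiber-preserving --- is exactly the rigidity statement that is not known, and nothing in your ``parallelism of out-neighbourhoods'' heuristic rules out an exotic isomorphism that permutes vertices across fibers. (Indeed, the paper's remark that it cannot even decide whether $D(p;1,m)\cong D(p;1,n)$ when $mn\equiv 1\bmod (p-1)$, conjecturing they are never isomorphic unless $m=n$, shows how little control one currently has over arbitrary isomorphisms between these digraphs.) So your submission should be read as: a correct proof of the ``if'' half, plus a reasonable but unexecuted strategy for the ``only if'' half, which remains open --- consistent with the paper's presentation of the statement as a conjecture rather than a theorem.
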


\bigskip
\noindent
{\sc 
Acknowledgement}. 
The authors are thankful to the anonymous referees whose thoughtful comments
improved the paper. The work of the last author 
was partially supported by a grant from the
Simons Foundation \#426092.

%% abstract

\noindent
{\bf Summary} 
We present an example of a result in graph theory that is used 
to obtain a result in another branch of mathematics. More precisely, 
we show that the isomorphism of certain directed graphs implies that 
some trinomials over finite fields have the same number of roots.

\bigskip

%% bios
\noindent
{\bf \uppercase{Robert S. Coulter}} (615822) (coulter@udel.edu) joined the faculty at the University of Delaware in 2003, having
previously held positions at the University of Queensland, Queensland University of Technology, and Deakin
University. He received his Ph.D. from the Department of Computer Science and Electrical Engineering at the
University of Queensland in 1998. He is an Australian, and greatly misses Farmers Union Iced Coffee and the
lack of snow shovels.

\bigskip

\noindent
{\bf\uppercase{Stefaan De Winter}} (723701) (sgdewint@mtu.edu) moved to Michigan Technological University in 2011 and
made such a great impression that he was promoted early to Associate Professor. Though he heralds from
Belgium, he abandoned the great chocolate and beer of his homeland, not to mention the cycling, and moved
to the United States in pursuit of happiness, a pursuit in which he was ironically successful through the medium
of another foreign national! 

\bigskip

\noindent
{\bf\uppercase{Alex Kodess}} (886420) (alex.kodess@farmingdale.edu) has recently moved 
to the Empire State and joined the Mathematics Department of Farmingdale State College. In the past he escaped the clutches of the state institution of the
second smallest state in the country, only to be subsumed by the smallest. In exchanging the University of
Rhode Island for the University of Delaware, he could at least be content in the knowledge his status ``improved''
from Ph.D. student to faculty member. His only regret is that he now finds he is expected to act like a responsible adult.

\bigskip

\noindent
{\bf\uppercase{Felix Lazebnik}} (111260) (fellaz@udel.edu) has been at the University of Delaware since receiving his Ph.D. from
the University of Pennsylvania under Herbert S. Wilf in 1987. He claims to understand Robert’s English,
Alex' Ph.D. Thesis and some of Stefaan’s geometry.

\end{document}